\documentclass{hjm1}
\usepackage{verbatim}
\usepackage{amsthm}
\usepackage{amsmath}
\usepackage{amsfonts}
\usepackage{amssymb}
\usepackage[all,cmtip]{xy}
\newcommand{\bb}[1]{\mathbb{#1}}

\newcounter{egcounter}
\setcounter{egcounter}{0}

\begin{document}
\title[Extending De Branges and Beurling]{A Class of Sub-Hardy Hilbert Spaces Associated with Weighted Shifts}

\author{Sneh Lata}
\address{Department of Mathematics,\\
         Shiv Nadar University,\\
         School of Natural Sciences,\\
         Gautam Budh Nagar - 203207,\\
         Uttar Pradesh, India}
\email{sneh.lata@snu.edu.in}

\author{ Dinesh Singh}
\address{Department of Mathematics,\\
        University of Delhi,\\
        Delhi 110007, India}
\email{dineshsingh1@gmail.com}

\keywords{de Branges spaces, Beurling's theorem, invariant subspaces, weighted shifts.}
\subjclass[2000]{Primary 47A15; 47B37}

\begin{abstract} In this note we study sub-Hardy Hilbert spaces on which the the 
action of the operator of multiplication by the coordinate function $z$ is 
assumed to be weaker than that of an isometry. We identify such operators with a 
class of weighted shifts.  
The well known results of de Branges and Beurling
are deduced as corollaries .
\end{abstract}

\maketitle

\section{Introduction} 
There is a well known result of de Branges, see \cite{Sar2}, that characterizes all Hilbert
spaces which are contractively contained in the Hardy Space $H^2$ and on which the
operator of multiplication by the coordinate function $z$ acts isometrically. This
result is the starting point of a model theory for operators initiated by de Branges
and Rovnyak [4] and developed further in significant ways by de Branges \cite{deB}, 
Sarason \cite{Sar1} and by many others as mentioned in the references of Nikolskii \cite{Nik} 
and \cite{Sar1}. The starting point of this theory is the above referred theorem of de Branges which
in turn is a generalization of the famous invariant subspace theorem of Beurling \cite{Beu}. 
De Branges has actually generalized not only Beurling's theorem but also its
vector-valued generalizations due to Lax (finite multiplicity) and Halmos (infinite
multiplicity). In the scalar case, Sarason \cite{Sar2} has provided a simple proof of the
theorem of de Branges. It was further observed by Singh and Singh \cite{SS2} that no
continuity condition (in particular the contractivity condition) needs to be assumed
on the sub-Hardy Hilbert space contained in $H^2$ to classify the Hilbert spaces that are vector
subspaces of $H^2$ and on which $T_z,$ the operator of multiplication by the coordinate function $z,$ 
has an isometric action. Later, Agrawal and Singh
\cite{AS} and Singh \cite{Singh} produced theorems of de Branges type in the context of several
Banach spaces of analytic functions and in the context of two variables respectively.
De Branges' result was further generalized to the context of the operator of multiplication
by a finite Blaschke factor acting isometrically on a Hilbert space sitting
inside the scalar Hardy space $H^2$ by Singh and Thukral \cite{ST} thereby generalizing
the Lax theorem without taking recourse to vectorial function theory that is embedding 
in a scalar setting leading to a general inner-outer
factorization theory. Redett \cite{Red} extended the theorem of Singh 
\cite{Singh}, to the context of the $H^p$ spaces. In fact, Redett 
has extended the related results along the above discussed lines in 
several directions such as in \cite{Red1} and \cite{Red2}.  

In this paper we look at sub-Hardy Hilbert spaces on which the operator $T_z$ satisfies conditions more general than being an isometry and we identify such operators with a class of weighted shifts. Our theorem 
has three distinct and interesting features:
\begin{itemize}
\item[(i)] We impose simple conditions on the operator $T_z$ - of multiplication by the coordinate function $z.$
\item[(ii)] The characterization due to Singh and Singh \cite{SS2} follows as a corollary to our theorem thereby the characterizations due to de Branges \cite{deB} and Beurling \cite{Beu} also follow from our result.
\item[(iii)] Our conditions - as demonstrated in Remark \ref{ShimCond} - put the operator 
$T_z$ beyond the purview of Shimorin's theorem \cite{Shim}. 
\end{itemize} 

\section{Definitions and Preliminaries} The Hardy space $H^2$ is the 
Hilbert space of all analytic functions on the open unit disk $\bb D$ 
whose Taylor coefficients are absolutely square summable. The class of
all bounded analytic functions on $\bb D$, denoted by $H^{\infty}$, is a 
Banach algebra under the norm
$$
||f||_{\infty}= {\rm sup}\{ |f(z)| : z \in \bb D\}. 
$$
It is well known that $H^{\infty}$ is properly contained in $H^2$ and that an 
$f$ in $H^2$ also belongs to $H^{\infty}$ if and only if $f$ multiplies 
$H^2$ back into $H^2$. For a detailed account on Hardy spaces we refer to \cite{Dur} 
and \cite{Hof}.

Let $T$ be a bounded operator on a Hilbert space $H.$ We call $T$ a {\em weighted 
shift operator} with weight sequence $\{w_n\}_{n=0}^{\infty}$ if 
$Te_n = w_ne_{n+1}$, where $\{e_n\}$ is some orthonormal
basis for H and where $w_n$ is a bounded sequence of positive numbers. 
An operator $T$ is said to shift an orthogonal basis $\{h_n\}$ of $H$ if $Th_n = h_{n+1}$ for each $n$. Certain
elementary properties related to weighted shifts can be found in the 
survey article by Shields \cite{Shi}. We quote two very basic and 
standard facts from \cite{Shi} in this
section. The first theorem gives a necessary and sufficient condition 
for an operator to be a weighted shift.

\begin{thm}{\rm({\cite{Shi}, Proposition 6})}\label{Shi1} $T$ is an injective weighted 
shift operator on a Hilbert space $H$ if and only if $T$ shifts an 
orthogonal basis for $H.$  
\end{thm}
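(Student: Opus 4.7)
The plan is to prove both directions constructively, passing back and forth between the orthonormal basis with positive weights and a purely orthogonal basis that is shifted without weights by $T$.

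For the forward implication, I would start with the data of an injective weighted shift: an orthonormal basis $\{e_n\}_{n=0}^{\infty}$ with $Te_n = w_n e_{n+1}$ and $w_n > 0$. The natural attempt is to define $h_0 = e_0$ and then $h_n = T^n h_0$. A routine induction gives $h_n = (w_0 w_1 \cdots w_{n-1}) e_n$, which is a nonzero scalar multiple of $e_n$, so $\{h_n\}$ is an orthogonal sequence whose closed linear span coincides with that of $\{e_n\}$, namely all of $H$. The telescoping identity $T h_n = (w_0 \cdots w_{n-1}) w_n e_{n+1} = h_{n+1}$ is then immediate.

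For the backward implication, assume $T$ shifts an orthogonal basis $\{h_n\}$, i.e.\ $Th_n = h_{n+1}$. I would normalize by setting $e_n = h_n / \|h_n\|$, which yields an orthonormal basis, and then compute
\[
Te_n \;=\; \frac{Th_n}{\|h_n\|} \;=\; \frac{\|h_{n+1}\|}{\|h_n\|}\, e_{n+1} \;=\; w_n\, e_{n+1},
\]
where $w_n := \|h_{n+1}\|/\|h_n\| > 0$. Boundedness of the weight sequence is automatic from $w_n = \|Te_n\| \le \|T\|$, so the definition of weighted shift is met. Injectivity is a short computation: if $\sum a_n e_n$ lies in the kernel then $\sum a_n w_n e_{n+1} = 0$, forcing $a_n w_n = 0$ and hence $a_n = 0$ since $w_n > 0$.

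There is no real obstacle; the proof is essentially bookkeeping of scalar factors. The only point that needs a brief verification is that the constructed $\{h_n\}$ in the forward direction is actually a basis (not merely an orthogonal set) and that the $w_n$ in the reverse direction are bounded --- both of which follow painlessly from the hypotheses. I would present the argument in two short paragraphs mirroring the structure above.
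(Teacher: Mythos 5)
Your proof is correct; the paper does not actually prove this statement itself but quotes it from Shields, and the remark immediately following it records exactly the normalization $e_n = f_n/\|f_n\|$, $w_n = \|f_{n+1}\|/\|f_n\|$ that you use in the backward direction. Both directions are the standard bookkeeping argument, and your verification of positivity and boundedness of the weights and of injectivity is complete.
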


\begin{rmk} It turns out that if $T$ shifts an orthogonal basis 
$\{f_n\}$, then the basis $e_n$ and the weight sequence $\{w_n\}$ for 
which $T(e_n)=w_ne_{n+1}$ holds are given by 
$$
e_n=\frac{f_n}{||f_n||}, \ w_n=\frac{||f_{n+1}||}{||f_n||} \ for \ all \ n.
$$
\end{rmk}

The second theorem tells us that a weighted shift operator can be looked upon as the operator of multiplication by $z$ on the Hilbert space 
$$
H^2(\beta)=\left\{f(z)=\sum_{n=0}^{\infty}{\alpha}_nz^n : \sum_{n=0}^{\infty}|\alpha_n|^2\beta_n^2<\infty\right\}
$$
with the inner product 
$$
\langle{f,g}\rangle = \sum_{n=0}^{\infty}\alpha_n\overline{\gamma_n}\beta_n^2
$$
for all $f=\sum_{n=0}^{\infty}\alpha_nz^n$ and $g=\sum_{n=0}^{\infty}\gamma_nz^n$ in $H^2(\beta).$ Here $\{\beta_n\}$ is a sequence of positive numbers.

\begin{thm}{\rm(\cite{Shi}, Proposition 7)}\label{Shi2} The linear operator $T_z$ 
(multiplication by $z$) on $H^2(\beta)$ is unitarily equivalent to an 
injective weighted shift operator (with weight sequence $\{w_n\}$). 
Conversely, every injective weighted shift operator is unitarily 
equivalent to $T_z$ acting on $H^2(\beta)$ for some sequence $\beta.$
The precise relation between $\{w_n\}$ and $\beta$ is given by 
\begin{eqnarray*}
\beta_0 &=& 1\\
\beta_n &=& w_0\cdots w_{n-1} \ for \ all \ n\ge 1.
\end{eqnarray*}
\end{thm}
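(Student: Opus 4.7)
The plan is to work with the monomials $\{z^n\}$ in $H^2(\beta)$ as the canonical orthogonal system and to set up a unitary intertwining in each direction. First I would observe directly from the definition of the inner product that $\langle z^m, z^n\rangle = \delta_{mn}\beta_n^2$, so $\{z^n\}$ is an orthogonal basis of $H^2(\beta)$ and $\{e_n := z^n/\beta_n\}$ is an orthonormal basis. A routine computation then gives
\[
T_z(e_n) = \frac{z^{n+1}}{\beta_n} = \frac{\beta_{n+1}}{\beta_n}\cdot\frac{z^{n+1}}{\beta_{n+1}} = \frac{\beta_{n+1}}{\beta_n}\, e_{n+1},
\]
so $T_z$ shifts the orthogonal basis $\{z^n\}$ of $H^2(\beta)$. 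By Theorem \ref{Shi1} (applied in its basis-shifting formulation, together with the remark that follows it), $T_z$ is therefore an injective weighted shift with weight sequence $w_n = \beta_{n+1}/\beta_n$.

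For the converse direction, suppose $T$ is an injective weighted shift on a Hilbert space $H$ with orthonormal basis $\{h_n\}$ and bounded positive weight sequence $\{w_n\}$, so that $Th_n = w_n h_{n+1}$. I would define $\beta_0 = 1$ and $\beta_n = w_0 w_1\cdots w_{n-1}$ for $n\ge 1$, form the corresponding space $H^2(\beta)$, and define a linear map $U:H\to H^2(\beta)$ on the basis by $U(h_n) = z^n/\beta_n$. Since $\{z^n/\beta_n\}$ is an orthonormal basis of $H^2(\beta)$ by the computation above, $U$ extends to a unitary. To check intertwining, compute $U(Th_n) = w_n\, U(h_{n+1}) = w_n \cdot z^{n+1}/\beta_{n+1} = z^{n+1}/\beta_n = T_z(z^n/\beta_n) = T_z(U h_n)$, using $\beta_{n+1}=w_n\beta_n$. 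Hence $UTU^{-1} = T_z$ on $H^2(\beta)$, and the relation $\beta_n = w_0\cdots w_{n-1}$ is exactly the one constructed.

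The only delicate point, and the one I would want to be careful about, is boundedness: $T_z$ on $H^2(\beta)$ must be a bounded operator for the statement to make sense. This is precisely the condition that $\{w_n\} = \{\beta_{n+1}/\beta_n\}$ be bounded, which is built into the definition of a weighted shift in the paper. Beyond that, the argument is essentially a matter of identifying the right orthonormal basis in $H^2(\beta)$ and writing down the obvious unitary; I do not anticipate any real obstacle.
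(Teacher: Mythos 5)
Your proof is correct and complete; the paper itself gives no proof of this statement (it is quoted verbatim from Shields's survey as a standard fact), and your argument --- identifying $\{z^n/\beta_n\}$ as an orthonormal basis, reading off the weights $w_n=\beta_{n+1}/\beta_n$ via Theorem \ref{Shi1}, and in the converse direction defining the unitary $h_n\mapsto z^n/\beta_n$ with $\beta_n=w_0\cdots w_{n-1}$ --- is exactly the canonical one. The only cosmetic point is that the normalization $\beta_0=1$ is needed for the identity $\beta_n=w_0\cdots w_{n-1}$ to hold on the nose in the forward direction; for a general $\beta_0>0$ one gets $\beta_n=\beta_0\,w_0\cdots w_{n-1}$, which is harmless since rescaling $\beta$ by a constant only rescales the norm of $H^2(\beta)$ and leaves the operator $T_z$ unchanged.
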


\section{Main Result}
\noindent The following is our main theorem.   

\begin{thm}\label{MT}
Let $M$ be a non-trivial Hilbert space contained in the Hardy space $H^2.$ 
Suppose the operator $T_z,$ which denotes multiplication by $z,$ is well defined on $M,$ 
and satisfies:
\begin{enumerate}
\item[(i)] There exists a $\delta>0$ such that $\delta ||f||_M \le ||T_zf||_M \le ||f||_M$ for all $f \in M.$
\item[(ii)] For each $n\in \bb N,$ $T_z^{*n}T_z^{n+1}(M)\subseteq T_z(M)$ (the adjoint of $T_z$ is with respect to the inner product on $M$).  
\end{enumerate} 
Then $T_z$ acts as a weighted shift on $M,$ and there exists a $b\in H^{\infty}$ such that 
$$
M=\overline{bH^2} \quad (the \ closure \ is \ in \ the \ norm \ of \ M)
$$ and 
$$
||bf||_M\le ||f||_2 \ \ for \ all \ f\in H^2.
$$ 
\end{thm}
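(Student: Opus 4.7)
My plan is to obtain a Wold-type orthogonal decomposition of $M$, force the wandering subspace to be one-dimensional, and then read off the weighted shift and the function $b$.

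First, I would note that condition (i) makes $T_z$ bounded by $1$ and bounded below by $\delta$, hence injective with closed range, so $N:=M\ominus T_z(M)$ is well defined. Condition (ii), rewritten through the adjoint identity $\langle T_z^{n+1}f,T_z^ng\rangle_M=\langle T_z^{*n}T_z^{n+1}f,g\rangle_M$, says exactly that $T_z^{n+1}(M)\perp T_z^n(N)$ for every $n\ge 0$ (since $T_z^{*n}T_z^{n+1}f\in T_z(M)$ and $N\perp T_z(M)$), so the family $\{T_z^n(N)\}_{n\ge 0}$ will be pairwise $M$-orthogonal. Every element of $T_z^n(M)$ lies in $z^nH^2$, so $\bigcap_n T_z^n(M)\subseteq\bigcap_n z^nH^2=\{0\}$; combining this with the finite decompositions $M=\bigoplus_{k<n}T_z^k(N)\oplus T_z^n(M)$ yields the Wold decomposition $M=\bigoplus_{n\ge 0}T_z^n(N)$.

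The crux is to show $\dim N=1$, which I would do by contradiction. Suppose $f,g\in N$ are linearly independent and form the cyclic closed subspaces $M_f=\overline{\mathrm{span}\{z^nf\}}^M$ and $M_g=\overline{\mathrm{span}\{z^ng\}}^M$. Uniqueness of the Wold decomposition (matching the $n$-th component in $T_z^n\bb Cf$ with that in $T_z^n\bb Cg$ forces $a_nf=b_ng$, and hence $a_n=b_n=0$ by independence) yields $M_f\cap M_g=\{0\}$ in $M$. On the other hand, using $\|T_z\|\le 1$, the partial sums of $\sum g_nz^nf$ and $\sum f_nz^ng$ (with $f_n,g_n$ the Taylor coefficients of $f,g$) are Cauchy in $M$-norm and produce $M$-limits $h^f\in M_f$ and $h^g\in M_g$. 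Since these same partial sums converge pointwise on $\bb D$ to the product $fg$, identifying each $M$-limit with its pointwise limit would give $h^f=fg=h^g$ as functions in $H^2$ and hence as elements of $M\subset H^2$, so $h^f=h^g\in M_f\cap M_g=\{0\}$, forcing $fg\equiv 0$ and contradicting $f,g\neq 0$. The principal obstacle will be precisely this identification of the abstract $M$-limit with the pointwise product before the weighted shift structure is available; it requires bootstrapping enough point-evaluation continuity on $M$ from the containment $M\subset H^2$ together with the orthogonal structure just obtained.

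With $\dim N=1$, I pick a unit vector $b\in N$. Then $\{z^nb\}_{n\ge 0}$ is an orthogonal basis of $M$ that $T_z$ shifts, so Theorem \ref{Shi1} identifies $T_z$ as an injective weighted shift on $M$ with weights $\beta_{n+1}/\beta_n$, where $\beta_n=\|z^nb\|_M\in[\delta^n,1]$. For $f=\sum a_nz^n\in H^2$, the partial sums $\sum_{n\le N}a_nz^nb$ are Cauchy in $M$-norm because $\sum|a_n|^2\beta_n^2\le\|f\|_2^2$, producing an $M$-limit $h$ with $\|h\|_M\le\|f\|_2$. The explicit weighted shift representation now makes $M$ a reproducing kernel Hilbert space at every $z_0$ with $|z_0|<\delta$ (since $\sum|z_0|^{2n}/\beta_n^2<\infty$ there), so the same identification principle used above shows that $h$ agrees with the holomorphic product $bf$ on $\bb D$. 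Hence $bf\in M\subset H^2$ for every $f\in H^2$, and the closed graph theorem applied to multiplication by $b$ from $H^2$ to $H^2$ forces $b\in H^\infty$. Finally, $\{z^nb\}\subset bH^2\subset M$ and these vectors span $M$, so $M=\overline{bH^2}^M$, with the bound $\|bf\|_M\le\|f\|_2$ being the Cauchy estimate above.
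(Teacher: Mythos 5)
Your overall architecture is the right one, and it matches the paper's: condition (i) gives $T_z$ closed range so $N=M\ominus T_z(M)$ makes sense, condition (ii) is exactly what makes the subspaces $T_z^n(N)$ pairwise orthogonal and yields the (finite and infinite) Wold-type decompositions, and the whole theorem then rests on showing $\dim N=1$. But there is a genuine gap, and it sits precisely where you flag it: the identification of the abstract $M$-norm limit $h$ of the partial sums $\sum_{k\le n}\gamma_k z^k f$ with the holomorphic product $fg$. Your proposed repair --- ``bootstrapping enough point-evaluation continuity on $M$'' --- is not available at that stage and is arguably circular. The theorem does \emph{not} assume that the inclusion $M\subseteq H^2$ is continuous, so convergence in $\|\cdot\|_M$ carries no a priori pointwise information; and to prove that evaluation at a point $z_0$ is $\|\cdot\|_M$-bounded you would want to expand a general element of $M$ in the orthogonal pieces and evaluate term by term, which is exactly the pointwise convergence of the expansion that you are trying to establish. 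The same objection applies to your later use of a ``reproducing kernel at $|z_0|<\delta$'' to identify $h$ with $bf$: knowing that the coefficient functionals are bounded does not by itself tell you that the value of the $H^2$-function $h$ at $z_0$ is the limit of the values of the partial sums.

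The correct device --- and the one the paper uses --- needs no point evaluations at all: from the finite decomposition $M=N\oplus T_z(N)\oplus\cdots\oplus T_z^{k}(N)\oplus T_z^{k+1}(M)$ and continuity of the orthogonal projections, the limit $h$ satisfies $h=\gamma_0 f+\gamma_1 zf+\cdots+\gamma_k z^k f+z^{k+1}h_1$ with $h_1\in H^2$, so the $k$-th Taylor coefficient of $h$ equals that of the formal product $fg$; since $k$ is arbitrary, $h=fg$ as elements of $H^2$. Once you insert this step, your argument closes, and in fact your route to $\dim N=1$ is a genuinely different and rather elegant one: you use uniqueness of the Wold components to get $M_f\cap M_g=\{0\}$ for linearly independent $f,g\in N$ and then place $fg$ in both cyclic subspaces, forcing $fg\equiv 0$ --- a contradiction with no case analysis. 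The paper instead shows that orthogonal $f,g\in N$ must both vanish at $0$, which only yields a contradiction when some element of $M$ is nonzero at the origin, and therefore needs a separate Case II (factoring out $z^{n+1}$ and transporting the hypotheses to $K=\{g: z^{n+1}g\in M\}$). So: fix the identification step by Taylor-coefficient matching rather than point evaluation, and you will have a complete proof that is, if anything, tidier than the published one.
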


\begin{proof} We first note that $T_z(M)$ is a closed subspace of $M$ as the bounded operator $T_z$ on $M$, using assumption (i), is bounded below. 

Put $N=M\ominus T_z(M).$ Note that $N$ is non-trivial. For if $N=\{0\}$, we have $M=T_z(M).$ 
This implies that for any $n\ge 1,$ an arbitrary $f\in M,$ can be written as $f=z^nf_n$ for some 
$f_n\in M.$ Since $f$ is holomorphic on $\bb D,$ this is possible only when $f=0,$ which contradicts 
the fact that $M$ is non-trivial. 

\noindent In view of assumption (ii), we can write 
\begin{eqnarray*}
M&=& N\oplus T_z(M)\\
&=& N\oplus T_z(N\oplus T_z(M))\\
&=& N\oplus T_z(N)\oplus T_z^2(M).
\end{eqnarray*}
Proceeding inductively we have, for every $n\in \bb N,$

\begin{equation}\label{odecomp}
M=N\oplus T_z(N)\oplus\cdots\oplus T_z^n(N)\oplus T_z^{n+1}(M)
\end{equation}

Choose $f=\sum_{n=0}^{\infty}\alpha_nz^n\in N$ with $||f||_M=1.$ We show that $f$ 
multiplies $H^2$ into $M.$ For any $g=\sum_{n=0}^{\infty}\gamma_nz^n\in H^2,$ write 
$g_n(z)=\sum_{k=0}^n\gamma_kz^k,$ and so $g_n\to g$ in $H^2.$  
For $n\in \bb N,$  
\begin{eqnarray}
||fg_n||^2_M &=& ||\sum_{k=0}^n\gamma_kz^kf||_M^2\nonumber \\
&=& \sum_{k=0}^n|\gamma_k|^2||z^kf||_M^2\nonumber \\
&\le & \sum_{k=0}^n|\gamma_k|^2\nonumber\\
&=&||g_n||_2^2\label{M2}.
\end{eqnarray}
Moreover, for $n\ge m\ge 1,$ by similar arguments as above, we get 
\begin{equation}\label{M3}
||fg_n-fg_m||^2_M \le ||g_n-g_m||_2^2.
\end{equation}
So, $\{fg_n\}$ is a Cauchy sequence in $M.$ Suppose $fg_n\to h\in M.$ 
Write 
$$
h=\gamma_0f+\gamma_1zf+\cdots+\gamma_kz^kf+z^{k+1}h_1
$$
for some $h_1\in H^2.$  
It can be seen that the coefficient of $z^k$ in the above equation is 
$$
\gamma_0\alpha_k+\gamma_1\alpha_{k-1}\cdots+\gamma_k\alpha_0
$$ 
which is same as the coefficient of $z^k$ in the formal product of the power series 
of $f$ and $g.$ Therefore $fg=h\in M,$ and with this we conclude that $fH^2\subseteq M\subseteq H^2.$ Hence $f\in H^{\infty}.$ The rest of our proof will be divided into the following two cases.

\noindent{\bf Case I.} Suppose there exists a function in $M$ that is non-zero at $0.$ Then, in view of Equation (\ref{odecomp}), we get a function in $N$ that is non-zero at $0.$ We assert that the dimension of $N$ is equal to 1. 

Suppose $f, g\in N$ such that $f\perp g$ in $M.$ We shall first show that  $fg\perp f$ in $M.$ If $f_n$ and $g_n$ stand for the $n^{th}$ partial sums of 
the Taylor series for $f$ and $g$ respectively, then it follows, using  Equations (\ref{odecomp}) and (\ref{M3}), that 
$f_ng \to fg$ and each $f_ng$ is orthogonal to $f$ in $M.$ Therefore, $fg\perp f$ in $M.$

Let $g= \gamma_0+zh$ for some $\gamma_0\in \bb C$ and $h\in H^2.$ Note that $f\in N.$ Therefore, $fg, \ fh$ are both in $M,$ and     
\begin{equation}\label{MT2}
fg=\gamma_0f+zfh
\end{equation}

Now since $zfh=T_z(fh)\in T_z(M)$, $zfh$ is orthogonal to $f.$ Then by taking 
inner product of both sides in (\ref{MT2}) with $f$ we obtain 
$$
\langle{fg,f}\rangle_M =\langle{\gamma_0f,f}\rangle_M
$$
But, by our earlier argument, $fg$ is orthogonal to $f$ in $M.$ Therefore  $\gamma_0 ||f||^2_M=0.$ Now if $f$ is non-zero, then $g(0)=\gamma_0 =0.$ 
In addition, if we assume $g\in N$ to be non-zero as well, then by using arguments similar to the one employed above, with roles of $f$ and $g$ being interchanged, we get $f(0)=0.$

Thus, if $dim(N)>1,$ every function in $N$ must be zero at $0$ which is not possible as  we do have a function in $N$ that is non-zero at $0$. Hence, $dim(N)=1.$  
 
Let $b\in N$ with $||b||_M=1.$ We shall now show that $bH^2$ is dense in $M.$ Since $b\in N, \ bH^2 \subseteq M$ which implies that 
$\overline{bH^2}\subseteq M.$ 
To establish the other containment, let $h\in M$ such that $h\perp bH^2.$ 
Then $h\perp bz^n$ for all $n\ge 0,$ and so $h\perp T_z^n(N)$  for all 
$n\ge 0$ as $N=\langle {b}\rangle.$ Therefore, using Equation (\ref{odecomp}), 
we get $h\in T_z^n(M)$ for all $n\ge 1.$ Thus, for each $n\ge 1,$ $h=z^nh_n$ for some $h_n\in M.$ Since $h$ is a holomorphic function on $\bb D,$ this is possible only when $h=0.$ Hence, $M= \overline{bH^2}.$ 
The fact that $||bf||_M\le ||f||_2$ for all $f\in H^2$ follows from Equation (\ref{M2}).

\vspace{1mm}
 
\noindent{\bf Case II.} Suppose $f(0)=0$ for all $f\in M.$ Let $n$ be the largest integer such that 
$f^{(n)}(0)=0$ for all $f\in M.$ Then each $f\in M$ is of the form 
$z^{n+1}g$ for some $g\in H^2,$ and also, 
there exists $h\in H^2$ such that $z^{n+1}h\in M$ and $h(0)\ne 0.$

Let $K=\{g\in H^2:z^{n+1}g\in M\}.$ Then K is a vector subspace of $H^2$ that is invariant under $T_z$ and contains an 
element that does not vanish at $0.$ Define $U:K\to M$ by $U(f)=z^{n+1}f.$ Clearly, $U$ is a vector space isomorphism from 
$K$ onto $M$ and therefore $||f||_K=||z^{n+1}f||_M$ defines a norm on K with 
respect to which $K$ becomes a Hilbert space and $U$ a unitary. 

For notational convenience, let $T_M$ and $T_K$ denote the operator $T_z$ acting on the Hilbert spaces $M$ 
and $K,$ respectively. Note that $T_M=UT_KU^*.$ Therefore, the Hilbert space $K$ satisfies all the hypotheses of the theorem. Also, K contains 
an element $f$ that is non-zero at 0. Hence, by Case I, there exists a $d\in H^{\infty}$ such that $K=\overline{dH^2}$ and 
$||df||_K\le ||f||_2$ for all $f\in H^2.$ Taking $b=z^{n+1}d$ we conclude that $M=\overline{bH^2}$ where $b\in H^{\infty}$ and $||bf||_M\le ||f||_2$ for all $f\in H^2.$   

In conclusion, for both the above cases, $\{bz^n\}$ is an orthogonal basis for $M=\overline{bH^2}.$ Since $T_z$ shifts this orthogonal basis, 
that is, $T_z(bz^n)=bz^{n+1},$ we conclude, using Theorem \ref{Shi1}, that $T_z$ acts as a weighted shift on $M.$ This completes the proof.
\end{proof}

\begin{rmk} In the above result we obtain that $bH^2$ is dense in $M.$ The subspace $bH^2$ is closed in $M$ 
if and only if there exists a $\delta>0$ such that 
\begin{equation}\label{Ine1}
\delta ||f||_M \le ||T_z^nf||_M \le ||f||_M 
\end{equation} 
for all $f \in M$ and all $n\ge 0.$ In this case, 
$\delta||f||_2\le ||bf||_M\le ||f||_2 $ for all $f\in H^2.$ Note that (\ref{Ine1})  is nothing but assumption (i) in the above 
theorem with $T_z^n$ in place of $T_z.$ 
\end{rmk}
 
To see that our above remark is valid we proceed as follows. If the inequalities in (\ref{Ine1}) hold true, then by using arguments similar to the ones employed to obtain (\ref{M2}) we get $\delta||h||_2\le ||bh||_M\le ||h||_2$ for all 
$h\in H^2.$ From this it easily follows that $bH^2$ is closed in $M.$ To prove the reverse implication, suppose $bH^2$ is a closed subspace of $M.$ Then the one-to-one contraction  $h\mapsto bh$ from $H^2$ onto $bH^2$, using Open Mapping Theorem, is actually invertible. Thus, there exists a positive constant $c$ such that $c||h||_2\le ||bh||_M\le ||h||_2$ for all $h\in H^2.$ This implies, for any $k, \ n\ge 0,$  
$c||bz^k||_M\le ||bz^{n+k}||_M\le ||bz^k||_M$ which yields the Inequalities in (\ref{Ine1}). 
 
A weighted Hardy space $H^2_\beta$ where $\{\beta_n\}$ is a decreasing 
sequence of positive numbers bounded away from zero serves as an easy example of $M$ for which (\ref{Ine1}) is satisfied. Here even though $M=H^2(\beta)=H^2$ as a vector space, but by choosing $\beta_n$ carefully we can guarantee that $T_z$ does not act as an isometry on $M.$ For example, take $\beta_n=(n+3)^{1/(n+3)}, \ n\ge 0.$

\begin{rmk}{\label{ShimCond}}
As our proof of the Theorem \ref{MT} indicates, we have obtained a decomposition of $M$ that allows us to proceed with the rest of the proof. There is a decomposition, due to Shimorin \cite{Shim}, of Hilbert spaces $H$ connected with operators $T$ that satisfy one of the conditions:
$$
||T^2x||^2 +||x||^2 \le 2 ||Tx||^2 \ \ {\rm for \ any} \ x\in H
$$
or
$$
||Tx+y||^2 \le 2\left(||x||^2 + ||Ty||^2\right) \ \ {\rm for \ any}\  x, y\in H
$$

This is not applicable in our situation as the following example indicates. Let $\beta_n=\frac{1}{2^{\frac{n}{2}}}$ 
when $n$ is even, and $\beta_n=\frac{1}{2^{\frac{n-1}{2}}}$ when $n$ is odd. Then $T_z,$ multiplication by $z,$ 
on the weighted Hardy space $H^2(\beta)$ satisfies the conditions of Theorem \ref{MT}, but fails the 
conditions of Shimorin's theorem. 
\end{rmk}

\section{Important Consequences of the Main Theorem}
In this section we discuss some important and immediate consequences of Theorem \ref{MT}.

\begin{cor}\label{cor1}{\rm \cite{SS2}} Let $M$ be a Hilbert space 
contained in $H^2$ as a vector subspace and such that $T_z(M)\subseteq M$ 
and let $T_z$ act isometrically on $M.$ Then there exists a $b\in H^{\infty}$ 
such that $M=bH^2,$ and $||bf||_M=||f||_2$ for all $f\in H^2.$  
\end{cor}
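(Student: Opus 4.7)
The plan is to deduce the corollary by verifying the two hypotheses of Theorem \ref{MT} and then upgrading the conclusion $M=\overline{bH^2}$ to $M=bH^2$ with equality of norms, using the remark following the main theorem.

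First, I would check hypothesis (i). Since $T_z$ is an isometry on $M$, we have $\|T_zf\|_M=\|f\|_M$ for every $f\in M$, so the inequalities $\delta\|f\|_M\le\|T_zf\|_M\le\|f\|_M$ hold with $\delta=1$. Next, I would check hypothesis (ii). The isometry condition gives $T_z^*T_z=I$ on $M$, hence
\[
T_z^{*n}T_z^{n+1}=T_z^{*n}T_z^n\,T_z=T_z
\]
for every $n\in\mathbb N$, so $T_z^{*n}T_z^{n+1}(M)=T_z(M)\subseteq T_z(M)$. Thus Theorem \ref{MT} applies and yields a function $b\in H^\infty$ with $M=\overline{bH^2}$ (closure in the norm of $M$) and $\|bf\|_M\le\|f\|_2$ for every $f\in H^2$.

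Finally, I would invoke the remark that follows Theorem \ref{MT}. Because $T_z$ is isometric, the strengthened inequality (\ref{Ine1}) holds automatically with $\delta=1$: indeed, iterating the isometry condition gives $\|T_z^nf\|_M=\|f\|_M$ for all $f\in M$ and all $n\ge 0$. The remark then tells us that $bH^2$ is closed in $M$, so $M=bH^2$, and furthermore $\delta\|f\|_2\le\|bf\|_M\le\|f\|_2$ holds for every $f\in H^2$; since $\delta=1$, we conclude $\|bf\|_M=\|f\|_2$ for all $f\in H^2$, which is the desired conclusion.

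There is no real obstacle here: the proof is essentially bookkeeping, and the only subtle point is recognizing that the equality case of Inequality (\ref{Ine1}), which follows immediately from the isometric action of $T_z$, is exactly what promotes the density statement $M=\overline{bH^2}$ to the closed equality $M=bH^2$ together with the norm identity.
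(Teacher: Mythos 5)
Your proof is correct and follows essentially the same route as the paper: verify hypotheses (i) and (ii) of Theorem \ref{MT} (both immediate for an isometry, with $T_z^{*n}T_z^n=I$ giving (ii)), note that Inequality (\ref{Ine1}) holds with $\delta=1$, and invoke the remark to upgrade $M=\overline{bH^2}$ to $M=bH^2$ with the norm equality. The paper's version is just a terser statement of the same bookkeeping.
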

\begin{proof} The two conditions stated in Theorem (\ref{MT}) and the set 
of inequalities in (\ref{Ine1}) (with $\delta =1$) are trivially satisfied 
when $T_z$ is an isometry. So, $M=bH^2$ for some $b\in H^{\infty},$ and  $||bf||_M=||f||_2$ for all $f\in H^2.$ 
\end{proof}

This result of Singh and Singh generalizes the characterization result of de Branges \cite{deB} and therefore of Beurling \cite{Beu} as well. Hence, 
our theorem also implies these characterizations. 

\subsection*{Acknowledgements} Both authors
thank the Mathematical Sciences Foundation,
Delhi for support and facilities needed to complete
the present work.


\begin{thebibliography}{99}
\bibitem{AS} S. Agrawal and D. Singh, {\em de Branges spaces contained in some Banach spaces of analytic
functions}, Illinois J. Math. {\bf 39} (1995), no. 3, 351-357.
\bibitem{Beu} A. Beurling, {\em On two problems concerning linear transformations in Hilbert space}, 
Acta Math. {\bf 81} (1949), 239-255.
\bibitem{deB} L. de Branges, {\em Square summable power series}. (manuscript)
\bibitem{deBR} L. de Branges and J. Rovnyak, {\em Square summable power series}, Holt, Rhinehart and 
Winston, 1966.
\bibitem{Dur} P.L. Duren, {\em Theory of Hp spaces}, Academic Press, 1970.
\bibitem{Hof} K. Hoffman, {\em Banach Spaces of Analytic Functions}, Prentice Hall, 1962.
\bibitem{Nik} N. K. Nikolskii, {\em Operators, functions and systems: an easy reading, Mathematical Surveys
and Monographs}, {\bf 92}. American Mathematical Society, Providence, RI, 2002.
\bibitem{Red} D.A. Redett, {\em Brangesian Spaces in $H^p(\bb T^2)$}, Proc. Amer. Math. Soc. {\bf 133} (2005), 
2689-2695.
\bibitem{Red1} D. A. Redett, {\em "Beurling Type" Subspaces of $L^p(\bb T^2)$ and $H^p(\bb T^2)$}, Proc. Amer. Math. Soc. {\bf 133}
(2005), 1151-1156.
\bibitem{Red2} D. A. Redett, {\em Sub-Lebesgue Hilbert Spaces on the Unit Circle}, Bull. London Math. Soc. {\bf 37}
(2005), 793-800.
Acad. Ser. A Math. Sci. {\bf 87} (2011), 56-59.
\bibitem{Sar1} D. Sarason, {\em Sub-Hardy Hilbert Spaces in the Unit Disk}, Lecture Notes in the Math. Sciences {\bf 10}, 
Wiley, New York, 1994.
\bibitem{Sar2} D. Sarason, {\em Shift-invariant spaces from the Brangesian point of view}, Math. Surveys 
Monograph {\bf 21} (1986), Amer. Math. Soc., Providence, RI, 153-166.
\bibitem{Shi} A. Shields, {\em Weighted shift operators and analytic function theory}, Topics in operator theory,
Math. Surveys {\bf 13} (1974), Amer. Math. Soc., Providence, R.I., 49-128.
\bibitem{Shim} S. Shimorin, {\em Wold-type decomposition and wandering subspaces for operators close to isometries}, 
J. reine angew. Math. {\bf 531} (2001), 147-189.
\bibitem{Singh} D. Singh, {\em Brangesian spaces in the polydisk}, Proc. Amer. Math. Soc. {\bf 110} (1990), 971-977.
\bibitem{SS2} U.N. Singh and D. Singh, {\em On a theorem of de Branges}, Indian Journal of Math. {\bf 33} (1991),
1-5.
\bibitem{ST} D. Singh and V. Thukral, {\em Multiplication by finite Blaschke factors on de Branges spaces}, J.
Operator Theory {\bf 37} (1997), 223-245.
\end{thebibliography}
\end{document}